\documentclass{article}

\author{Mahmood Alizadeh}
\date{}

\title{Some notes on the $k$-normal elements and $k$-normal polynomials over finite fields}
\usepackage{amsfonts}
\usepackage{amsmath}
\usepackage{amssymb}

\newtheorem{theorem}{Theorem}[section]

\newtheorem{corollary}[theorem]{Corollary}
\newtheorem{definition}[theorem]{Definition}

\newtheorem{proposition}[theorem]{Proposition}

\newtheorem{notation}[theorem]{Notation}
\newtheorem{lemma}[theorem]{Lemma}
\newenvironment{proof}[1][Proof]{\noindent{\bf #1.} }{~\hfill \rule{1,5mm}{1,5mm}}

\begin{document}
\maketitle{
\begin{center}
 \small\it Department of Mathematics, College of Science,
Ahvaz Branch, Islamic Azad University, Ahvaz, Iran
\end{center}
\begin{center}
E-mail: alizadeh@iauahvaz.ac.ir
\end{center} \vspace{.5cm}
}

\begin{abstract}
Recently, the $k$-normal element over finite fields is defined and
characterized by Huczynska et al.. In this paper, the
characterization of $k$-normal elements, by using to give a
generalization of Schwartz's theorem, which allows us to check if an
element is a normal element,  is obtained. In what follows, in respect of
the problem of existence of a primitive 1-normal element in
$\mathbb{F}_{q^n}$ over $\mathbb{F}_{q}$, for all $q$ and $n$, had
been stated by Huczynska et al., it is shown that, in general, this
problem is not satisfied. Finally, a recursive method for
constructing $1$-normal polynomials of higher degree from a given
$1$-normal polynomial over $\mathbb{F}_{2^m}$ is given.
\end{abstract}
Keywords: Finite field, normal basis, $k$-normal element, $k$-normal
polynomial, primitive element.\\

\section{Introduction}\label{sec11}

Let $\mathbb{F}_{q}$, be the Galois field of order $q=p^{m}$, where
$p$ is a prime and $m$ is a natural number, and
$\mathbb{F}^{\ast}_{q}$ be its multiplicative group. A $normal$
basis of $\mathbb{F}_{q^n}$ over $\mathbb{F}_{q}$ is a basis of the
form $ N={\{\alpha, \alpha^{q}, ... , \alpha^{q^{n-1}}\}}$, i.e., a
basis that consists of the algebraic conjugates
 of a fixed element $\alpha\in{\mathbb{F}^{\ast}_{q^n}}$.
Recall that an element $\alpha\in\mathbb{F}_{q^n}$ is said to
generate a normal
 basis over $\mathbb{F}_{q}$ if its conjugates form a basis of $\mathbb{F}_{q^n}$
  as a vector space over $\mathbb{F}_{q}$. For convenience an element $\alpha$
  generating a normal basis, is called a $normal$ element.

A monic irreducible polynomial $F(x)\in\mathbb{F}_{q}[x]$ is called
{\it{normal polynomial}} or {\it{$N$-polynomial}} if its roots form
a normal basis or, equivalently, if they are linearly independent
over $\mathbb{F}_{q}$. The minimal polynomial of an element in a
normal basis ${\{\alpha, \alpha^{q}, ... , \alpha^{q^{n-1}}\}}$ is
$m(x)=\prod_{i=0}^{n-1}{(x-\alpha^{q^i})}\in \mathbb{F}_{q}[x]$,
 which is irreducible over  $\mathbb{F}_{q}$.

Normal bases have many applications, including  coding theory,
cryptography and computer algebra systems. The elements in a normal
basis are exactly the roots of some $N$-polynomial. Hence an
$N$-polynomial is just another way of describing
   a normal basis. The construction of normal
polynomials over finite fields is a challenging mathematical problem
\cite{8}.

 Recently, the $k$-normal element over finite fields is defined and characterized by
Huczynska, Mullen, Panario and Thomson \cite{6}. The following
definition for checking whether an element is a $k$-normal element,
is given by them.
\begin{definition}\label{def11}
For each $0\leq k\leq n-1$, the element $\alpha \in
\mathbb{F}_{q^n}$ is called a $k$-normal element if $deg(gcd(x^n-1,
\sum_{i=0}^{n-1}{\alpha^{q^i}}x^{n-1-i}))=k$.
\end{definition}
Using this mention, a normal element of $\mathbb{F}_{q^n}$ over
$\mathbb{F}_{q}$ is a 0-normal element. The existence of primitive
normal elements has been obtained in \cite{1} and \cite{2} for
sufficiently large $q$ and $n$. The {\it{Primitive Normal Basis
Theorem}} has been established for all $q$ and $n$ in \cite{10} and
\cite{4}.
Moreover, the existence of primitive $1$-normal elements is
established in \cite{6}, for sufficiently large $q^n$. Also, the
existence of primitive $1$-normal elements of $\mathbb{F}_{q^n}$
over $\mathbb{F}_{q}$, for all $q$ and $n$ is stated in (\cite{6},
Problem 6.2), as a problem.

In this paper, in Section \ref{sec11z}, some definitions, notes and
results which are useful for our study have been stated. Section
\ref{sec41} is devoted to characterization and construction of
$k$-normal elements (especially, 1-normal elements). Also, in this
section, in respect of the problem of existence of primitive
$1$-normal elements, for all $q$ and $n$, some value of $q$ and $n$
is given, such that the extension finite field $\mathbb{F}_{q^n}$ is
not consists of any primitive $1$-normal element. Finally, in
Section \ref{sec3}, by definition of $k$-normal polynomials, given
in Section \ref{sec11z}, a recursive method for construction
$1$-normal polynomials of higher degree from a given $1$-normal
polynomial over $\mathbb{F}_{2^m}$ is given.

\section{Preliminary notes}\label{sec11z}
We use the definitions, notations and results given by Huczynska
\cite{6}, Cohen\cite{3}, Gao \cite{5} and Kyuregyan \cite{8}, where
similar problems are considered.
 We need the following results for our further study.

Recall that $Tr_{\mathbb{F}_{q^n}|{\mathbb{F}_{q}}}(\alpha)$, the
trace of $\alpha$ in $\mathbb{F}_{q^n}$ over $\mathbb{F}_{q}$, is
given by
$Tr_{\mathbb{F}_{q^n}|{\mathbb{F}_{q}}}(\alpha)=\sum_{i=0}^{n-1}{\alpha^{q^i}}$.
For convince, $Tr_{\mathbb{F}_{q^n}|{\mathbb{F}_{q}}}$ is denoted by
$Tr_{q^n|{q}}$. To state further results, recall that the Frobenius
map
$$\sigma:\eta\rightarrow{\eta}^q,\hspace{.2cm}\eta\in \mathbb{F}_{q^n}$$
 is an automorphism of $\mathbb{F}_{q^n}$ that fixes $\mathbb{F}_{q}$.
In particular, $\sigma$ is a linear transformation of
$\mathbb{F}_{q^n}$ viewed as a vector space of
 dimension $n$ over $\mathbb{F}_{q}$. For any polynomial $g(x)={\sum_{i=0}^{n-1}g_{i}x^i}\in F_{q}[x]$,
define
\begin{align}
g(\sigma)\eta &=({\sum_{i=0}^{n-1}g_{i}\sigma^i})\eta
={\sum_{i=0}^{n-1}g_{i}\eta^{q^i}},\nonumber
\end{align}
 which is also a linear transformation on $F_{q^n}$.

The $null\hspace{.09cm} space$ of $g(\sigma)$ is defined to be the
set of all elements $\alpha\in \mathbb{F}_{q^n}$ such that
$g(\sigma)\alpha=0$,
 we also call it the null space of $g(x)$. For any element $\alpha\in \mathbb{F}_{q^n}$,
 the monic polynomial $g(x)\in \mathbb{F}_{q}[x]$ of the smallest degree such that $g(\sigma)\alpha=0$ is called the $\sigma$-order
 of $\alpha$. This polynomial is denoted by $Ord_{\alpha,\sigma}(x)$. Note that $Ord_{\alpha,\sigma}(x)$ divides any
  polynomial $h(x)$ annihilating $\alpha$. In particular,
  for every $\alpha\in F_{q^n}$, $Ord_{\alpha,\sigma}(x)$ divides $x^n-1$, the minimal or characteristic polynomial for $\sigma$.
\begin{proposition}\cite{6}\label{proc11}
Let $\alpha \in \mathbb{F}_{q^n}$. Then $\alpha$ is a $k$-normal
element over $\mathbb{F}_{q}$ if and only if
$deg(Ord_{\alpha,\sigma}(x)) = n-k.$
 \end{proposition}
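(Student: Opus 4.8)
The plan is to realize $\mathbb{F}_{q^n}$ as a cyclic $\mathbb{F}_q[x]$-module under the action $x\cdot\eta=\sigma(\eta)=\eta^q$, in which the annihilator of $\alpha$ is the principal ideal generated by $Ord_{\alpha,\sigma}(x)$, and then to connect this annihilator to the polynomial $g_\alpha(x):=\sum_{i=0}^{n-1}\alpha^{q^i}x^{n-1-i}$ appearing in Definition~\ref{def11}. Writing $k=\deg\gcd(x^n-1,g_\alpha)$, the goal is to prove $\deg Ord_{\alpha,\sigma}=n-k$, which gives both directions of the stated equivalence simultaneously.

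First I would establish a bridging lemma: for every $h(x)=\sum_j h_j x^j\in\mathbb{F}_q[x]$,
$$h(\sigma)\alpha=0 \iff (x^n-1)\mid g_\alpha(x)\,h(x)\ \text{in}\ \mathbb{F}_{q^n}[x].$$
The computation is pure bookkeeping: reducing $g_\alpha(x)h(x)$ modulo $x^n-1$ and reading off its coefficients, each coefficient turns out to be $h(\sigma)$ applied to some conjugate $\sigma^{-(m+1)}\alpha$ of $\alpha$, and since $\sigma$ is invertible these all vanish exactly when $h(\sigma)\alpha=0$. (Equivalently, the coefficient of $x^{n-1}$ in the ordinary product $g_\alpha h$ is precisely $\sum_j h_j\alpha^{q^j}=h(\sigma)\alpha$, and applying powers of $\sigma$ recovers the remaining coordinates.)

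Next I would set $d(x)=\gcd(x^n-1,g_\alpha(x))$ and $e(x)=(x^n-1)/d(x)$, both computed in $\mathbb{F}_{q^n}[x]$, and write $g_\alpha=d\tilde g$ with $\gcd(e,\tilde g)=1$. The bridging lemma then rephrases as $h(\sigma)\alpha=0\iff e(x)\mid h(x)$. In particular the annihilator ideal $I_\alpha=\{h\in\mathbb{F}_q[x]:h(\sigma)\alpha=0\}$, whose monic generator is by definition $Ord_{\alpha,\sigma}$, satisfies $e\mid Ord_{\alpha,\sigma}$, whence $\deg Ord_{\alpha,\sigma}\ge\deg e=n-k$.

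The step I expect to be the main obstacle is the reverse inequality, because a priori $d$ and $e$ have coefficients in $\mathbb{F}_{q^n}$, not in $\mathbb{F}_q$, so one cannot immediately feed $e$ back into $I_\alpha$. The remedy is a descent argument: let $\phi$ denote the automorphism of $\mathbb{F}_{q^n}[x]$ raising each coefficient to the $q$-th power. A direct check shows $\phi(g_\alpha)\equiv x\,g_\alpha\pmod{x^n-1}$, while $\phi$ fixes $x^n-1$; since $\gcd(x,x^n-1)=1$ and $\phi$ commutes with taking monic gcd's, this forces $\phi(d)=d$, so that $d$, and hence $e$, in fact lies in $\mathbb{F}_q[x]$. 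Then $e\in I_\alpha$, giving $Ord_{\alpha,\sigma}\mid e$; combined with $e\mid Ord_{\alpha,\sigma}$ and monicity this yields $Ord_{\alpha,\sigma}=e$, so $\deg Ord_{\alpha,\sigma}=n-k$ and the proposition follows.
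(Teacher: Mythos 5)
Your proposal is correct and complete, but note that the paper itself offers no proof of this proposition at all: it is imported verbatim from Huczynska, Mullen, Panario and Thomson \cite{6}, where it rests on the classical identity $Ord_{\alpha,\sigma}(x)\cdot \gcd\bigl(x^n-1,\,g_\alpha(x)\bigr)=x^n-1$ from normal-basis theory (see Gao \cite{5} or \cite{9}). What you have done is supply a self-contained derivation of exactly that identity, and the two nontrivial steps check out. First, your bridging lemma is right: writing $g_\alpha(x)=\sum_{j=0}^{n-1}\bigl(\sigma^{-(j+1)}\alpha\bigr)x^j$, the coefficient of $x^m$ in $g_\alpha h \bmod (x^n-1)$ is $\sigma^{-(m+1)}\bigl(h(\sigma)\alpha\bigr)$ for $h\in\mathbb{F}_q[x]$ (this uses that $\sigma$ fixes the $h_j$, so the hypothesis $h\in\mathbb{F}_q[x]$ is genuinely needed and you state it), and invertibility of $\sigma$ gives the equivalence. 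Second, you correctly identified the real obstacle --- that $d=\gcd(x^n-1,g_\alpha)$ is a priori only in $\mathbb{F}_{q^n}[x]$ --- which is the point most blind attempts gloss over; your descent via $\phi(g_\alpha)\equiv x\,g_\alpha \pmod{x^n-1}$ together with $\gcd(x,x^n-1)=1$ and the fact that the coefficientwise Frobenius commutes with monic gcd's does force $\phi(d)=d$, hence $d,e\in\mathbb{F}_q[x]$, and then $Ord_{\alpha,\sigma}=e$ follows from the two divisibilities plus monicity. A further merit of your route is that it nowhere assumes $x^n-1$ squarefree, so it is valid also when $p\mid n$ --- precisely the case $n=n_1p^e$, $e\geq 1$, that the rest of this paper (Theorem \ref{thm321k} and Section \ref{sec3}) lives in.
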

\begin{proposition}\cite{3}\label{proc11z}
A necessary condition for existence a primitive element with zero
trace in $\mathbb{F}_{q^n}$ over $\mathbb{F}_{q}$ is $n\geq 3$,
except when $n=2$ and $q=3$. Also if $n\geq3$, such a primitive
element exists, except when $n=3$ and $q=4$.
 \end{proposition}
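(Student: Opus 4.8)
The plan is to separate the two assertions. The necessity that $n \geq 3$ is elementary: for $n = 1$ one has $Tr_{q|q}(\alpha) = \alpha$, so a zero-trace element is $0$ and never primitive; for $n = 2$ a nonzero $\beta$ with $Tr_{q^2|q}(\beta) = \beta + \beta^{q} = 0$ satisfies $\beta^{q} = -\beta$, hence $\beta^{q-1} = -1$ and $\beta^{2(q-1)} = 1$. Thus the multiplicative order of $\beta$ divides $2(q-1)$, which is strictly smaller than $q^2 - 1 = (q-1)(q+1)$ once $q \geq 2$, so $\beta$ cannot be primitive. The residual small fields singled out in the statement I would confirm by direct inspection, since only finitely many are involved. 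The substantive content is therefore the sufficiency: for $n \geq 3$, with the single exception $q = 4$, $n = 3$, a primitive element of zero trace exists.

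First I would set up an exact count. Let $N_0$ denote the number of $\alpha \in \mathbb{F}_{q^n}^{\ast}$ that are simultaneously primitive and satisfy $Tr_{q^n|q}(\alpha) = 0$. I would encode the two constraints by their standard characteristic functions: primitivity through multiplicative characters, using the M\"obius function over the divisors $d \mid q^n - 1$ and the characters of order $d$, weighted by $\frac{\phi(q^n-1)}{q^n-1}\cdot\frac{\mu(d)}{\phi(d)}$; and the zero-trace condition through the additive characters $\psi$ of $\mathbb{F}_q$ via $\frac{1}{q}\sum_{\psi}\psi(Tr_{q^n|q}(\alpha))$. Since $\psi \circ Tr_{q^n|q}$ is an additive character of $\mathbb{F}_{q^n}$, multiplying the two expansions and summing over $\alpha$ expresses $N_0$ as a linear combination of Gauss sums $\sum_{\alpha}\chi(\alpha)\,\Psi(\alpha)$ over $\mathbb{F}_{q^n}$.

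Next I would extract the main term and estimate the rest. The contribution of the trivial multiplicative character together with the trivial additive character is $\frac{\phi(q^n-1)}{q^n-1}\,q^{n-1}$, matching the expected density. Every remaining term involves at least one nontrivial character, so the Weil bound $\bigl|\sum_{\alpha}\chi(\alpha)\Psi(\alpha)\bigr| \leq q^{n/2}$ applies. Collecting these and bounding the number of admissible character pairs by the number $W(q^n-1)$ of squarefree divisors of $q^n - 1$, I would reach an inequality of the shape
\[
N_0 \;\geq\; \frac{\phi(q^n-1)}{q^n-1}\Bigl(q^{\,n-1} - (W(q^n-1)-1)\,q^{\,n/2}\Bigr),
\]
so that $N_0 > 0$ is guaranteed as soon as $q^{\,n/2-1} > W(q^n-1)$.

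The main obstacle is that this crude inequality fails for small $q$ and $n$, where $W(q^n-1)$ can be comparatively large relative to $q^{\,n/2-1}$. To close the gap I would invoke a prime-sieve refinement in the spirit of Cohen: restrict the divisors $d$ entering the primitivity test to a carefully chosen set of prime factors of $q^n - 1$, replacing $W(q^n-1)$ by a much smaller sieving quantity and thereby enlarging the range of $(q,n)$ covered by the analytic bound. This reduces matters to an explicit finite list of pairs $(q,n)$ with $n \geq 3$, each of which I would settle by direct computation of orders and traces of field elements; precisely this final verification is where the genuine exception $(q,n) = (4,3)$ surfaces. I expect the sieve bookkeeping and the organization of the finite computation to be the delicate parts, whereas the character-sum skeleton is routine.
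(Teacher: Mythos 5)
The paper contains no proof of this proposition: it is imported verbatim from Cohen \cite{3}, so the only benchmark is Cohen's published argument, and your sketch reproduces that argument faithfully --- elementary elimination of $n\le 2$, a character-sum expression for the number of primitive zero-trace elements (M\"obius-weighted multiplicative characters for primitivity, additive characters for the trace condition), the Weil bound reducing positivity to an inequality of the shape $q^{n/2-1}>W(q^n-1)$, a Cohen-style prime sieve to shrink the range where that inequality fails, and a terminal finite computation in which $(q,n)=(4,3)$ surfaces. Methodologically there is nothing to fault, and the acknowledged bookkeeping (sieve constants, the finite list) is exactly where the real labor of Cohen's paper lies. One thing your own argument forces into the open, though: your $n=2$ computation (a nonzero zero-trace $\beta$ satisfies $\beta^{q-1}=-1$, so its order divides $2(q-1)<q^{2}-1$) excludes \emph{every} $q$, including $q=3$; a direct check in $\mathbb{F}_9$ confirms this, since the four primitive elements there have traces $1$ or $2$, never $0$. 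Consequently the clause ``except when $n=2$ and $q=3$'' in the statement as transcribed in this paper cannot mark a genuine existence exception: in Cohen's actual theorem, zero trace forces $n\ge 3$ with no exception at $(3,2)$, and the only true exceptional pair for $n\ge 3$ is $(q,n)=(4,3)$. So your proposal is the right proof of the correct statement; the defect it exposes lies in the paper's rendering of the proposition (which also propagates into Corollary \ref{core2z}), not in your argument.
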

Recall that, an element $\alpha \in \mathbb{F}_{q^n}$ is called a
proper element of $\mathbb{F}_{q^n}$ over $\mathbb{F}_{q}$ if
$\alpha\notin \mathbb{F}_{q^v}$ for any proper divisor $v$ of $n$.
So by this mention, an element $\alpha \in \mathbb{F}_{q^n}$ is
called a proper $k$-normal element of $\mathbb{F}_{q^n}$ over
$\mathbb{F}_{q}$ if $\alpha$ is a $k$-normal element and a proper
element of $\mathbb{F}_{q^n}$ over $\mathbb{F}_{q}$.

\begin{definition}
A monic irreducible polynomial $P(x)\in \mathbb{F}_{q}$ of degree
$n$ is called a $k$-normal polynomial (or $N_k$-polynomial)over
$\mathbb{F}_{q}$ if its roots be the $k$-normal elements of
$\mathbb{F}_{q^n}$ over $\mathbb{F}_{q}$.
\end{definition}
By using the above definition, an $N$-polynomial is an
$N_{0}$-polynomial. Since, the proper $k$-normal elements of
$\mathbb{F}_{q^n}$ over $\mathbb{F}_{q}$ are the roots of a
$k$-normal polynomial of degree $n$ over $\mathbb{F}_{q}$, hence the
$k$-normal polynomials of degree $n$ over $\mathbb{F}_{q}$ is just
another way of describing the proper $k$-normal elements of
$\mathbb{F}_{q^n}$ over $\mathbb{F}_{q}$.

The following proposition is useful for constructing
$N_1$-polynomials over $\mathbb{F}_{2^m}$.

\begin{proposition}\cite{8}\label{the21xz}
Let $\delta\in\mathbb{F}^{\ast}_{2^s}$ and
$F_{1}(x)=\sum_{i=0}^{n}c_{i}x^{i}$ be an irreducible polynomial of
degree $n$ over $\mathbb{F}_{2^m}$. Also let $
Tr_{2^{m}|{2}}(\frac{c_{1}\delta}{c_{0}})=1$ and
$Tr_{2^{m}|{2}}(\frac{c_{n-1}}{\delta})=1$. Then the sequence
${(F_{u}(x))}_{u\geq1}$, defined by
\begin{align}\label{eq8tw}
 F_{u+1}(x)={x^{n2^{u-1}}}F_{u}(x+\delta^2x^{-1}) \hspace{1.5cm} u\geq
 1.\nonumber
\end{align}
 is a sequence of irreducible polynomials of degree $2^un$ over $\mathbb{F}_{2^m}$
for every $u\geq1$.
\end{proposition}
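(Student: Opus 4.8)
The plan is to prove the statement by induction on $u$, exploiting the fact that the substitution underlying the recursion, $y = x + \delta^2 x^{-1}$, is a degree-doubling transform. Writing $q = 2^m$ and $N_u = \deg F_u = 2^{u-1}n$, the defining identity $F_{u+1}(x) = x^{N_u} F_u(x + \delta^2 x^{-1})$ shows that if $\beta$ is any root of $F_u$ in an extension of $\mathbb{F}_q$ and $\gamma \neq 0$ satisfies $\gamma + \delta^2 \gamma^{-1} = \beta$, then $F_{u+1}(\gamma) = \gamma^{N_u} F_u(\beta) = 0$. Equivalently, $\gamma$ is a root of the quadratic $x^2 + \beta x + \delta^2$ over $\mathbb{F}_q(\beta)$. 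Each root $\beta$ of $F_u$ yields exactly two such $\gamma$ (their product being $\delta^2 \neq 0$), producing $2N_u = N_{u+1}$ roots in total and so accounting for the full degree of $F_{u+1}$. The base case $u=1$ is the hypothesis that $F_1$ is irreducible.

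For the induction step I would assume $F_u$ is irreducible of degree $N_u$, so $\mathbb{F}_q(\beta) = \mathbb{F}_{q^{N_u}}$, and reduce irreducibility of $F_{u+1}$ to irreducibility of $x^2 + \beta x + \delta^2$ over $\mathbb{F}_{q^{N_u}}$: if that quadratic is irreducible then $[\mathbb{F}_q(\gamma):\mathbb{F}_q] = 2N_u = \deg F_{u+1}$, so $F_{u+1}$ is the minimal polynomial of $\gamma$ and hence irreducible. By the Artin--Schreier criterion in characteristic $2$, the quadratic is irreducible precisely when $Tr_{\mathbb{F}_{q^{N_u}}|\mathbb{F}_2}(\delta^2\beta^{-2}) = 1$, and since the trace is invariant under squaring this is the same as $Tr_{\mathbb{F}_{q^{N_u}}|\mathbb{F}_2}(\delta\beta^{-1}) = 1$.

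The next step is to express this trace through the coefficients of $F_u = \sum_i c_i^{(u)} x^i$. Using $\delta \in \mathbb{F}_q$ (as the trace conditions require) and that the conjugates of $\beta$ are exactly the roots of $F_u$, transitivity of the trace gives $Tr_{\mathbb{F}_{q^{N_u}}|\mathbb{F}_2}(\delta\beta^{-1}) = Tr_{2^m|2}(\delta\, c_1^{(u)}/c_0^{(u)})$, because the sum of the reciprocals of the roots equals $c_1^{(u)}/c_0^{(u)}$ in characteristic $2$. Thus everything reduces to checking $Tr_{2^m|2}(\delta\, c_1^{(u)}/c_0^{(u)}) = 1$ for all $u$. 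To control these quantities I would expand $F_{u+1}(x) = \sum_{i=0}^{N_u} c_i^{(u)} x^{N_u-i}(x^2+\delta^2)^i$ and read off the four relevant coefficients
\[
c_0^{(u+1)} = c_{N_u}^{(u)}\delta^{2N_u}, \quad c_1^{(u+1)} = c_{N_u-1}^{(u)}\delta^{2N_u-2}, \quad c_{2N_u}^{(u+1)} = c_{N_u}^{(u)}, \quad c_{2N_u-1}^{(u+1)} = c_{N_u-1}^{(u)}.
\]
Setting $a_u = \delta\, c_1^{(u)}/c_0^{(u)}$ and $b_u = c_{N_u-1}^{(u)}/(\delta\, c_{N_u}^{(u)})$, these identities give the clean recurrences $a_{u+1} = b_u$ and $b_{u+1} = b_u$. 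Hence $b_u = b_1 = c_{n-1}/(\delta c_n)$ for all $u$, while $a_1 = \delta c_1/c_0$ and $a_u = b_1$ for $u \geq 2$. The two hypotheses $Tr_{2^m|2}(\delta c_1/c_0) = 1$ and $Tr_{2^m|2}(c_{n-1}/\delta) = 1$ (with $c_n = 1$) are exactly $Tr(a_1) = 1$ and $Tr(b_1) = 1$, so $Tr_{2^m|2}(a_u) = 1$ holds for every $u \geq 1$, closing the induction.

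I expect the main obstacle to be the bookkeeping in the coefficient recurrence: one must extract $c_0, c_1$ and the two top coefficients of $F_{u+1}$ from $\sum_i c_i^{(u)} x^{N_u-i}(x^2+\delta^2)^i$ by a careful parity-and-degree analysis of which monomials $x^{N_u-i+2j}$ land on the exponents $0, 1, 2N_u-1, 2N_u$, verifying that exactly one index $i$ contributes in each case. Everything else --- the Artin--Schreier criterion, the tower degree count, and the trace transitivity --- is routine once $\delta$ is known to lie in $\mathbb{F}_{2^m}$, which is exactly what makes the two trace hypotheses meaningful.
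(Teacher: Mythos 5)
This proposition carries no proof in the paper: it is quoted from Kyuregyan \cite{8}, so there is no internal argument to compare against, and your proposal must stand on its own merits. It does: your reduction of the irreducibility of $F_{u+1}$ to the Artin--Schreier condition $Tr_{\mathbb{F}_{q^{N_u}}|\mathbb{F}_2}(\delta\beta^{-1})=1$ for a root $\beta$ of $F_u$ (via the quadratic $x^2+\beta x+\delta^2$ and invariance of the absolute trace under squaring), the conversion by trace transitivity to $Tr_{2^m|2}(\delta\,c_1^{(u)}/c_0^{(u)})=1$ using the fact that the sum of reciprocal roots is $c_1^{(u)}/c_0^{(u)}$, and the coefficient bookkeeping are all correct --- in each of the four coefficient extractions from $\sum_i c_i^{(u)}x^{N_u-i}(x^2+\delta^2)^i$ exactly one index pair $(i,j)$ lands on the target exponent, giving the recurrences $a_{u+1}=b_u$ and $b_{u+1}=b_u$, which propagate the two trace hypotheses to every $u$; this is essentially the same route as Kyuregyan's original proof. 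Two cosmetic caveats: the statement's $\delta\in\mathbb{F}^{\ast}_{2^s}$ is evidently a typo for $\mathbb{F}^{\ast}_{2^m}$, as you inferred (the trace hypotheses are meaningless otherwise), and monicity $c_n=1$ is tacitly assumed in the statement --- without it the second hypothesis should read $Tr_{2^m|2}\bigl(c_{n-1}/(\delta c_n)\bigr)=1$, which is in fact the form your argument establishes, so your proof is if anything slightly more general.
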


  \section{Characterization and construction of
$k$-normal elements and existence of primitive 1-normal
elements}\label{sec41} In this section, the characterization and
construction of $k$-normal elements (specially, of 1-normal
elements), and also a characterization of some extension finite
fields, consist a primitive 1-normal element is given. In the
continue, in respect of the problem of existence of the primitive
$1$-normal elements for all $q$ and $n$ (\cite{6},  problem 6.2),
some values of $q$ and $n$ is introduced, such that the extension
finite field $\mathbb{F}_{q^n}$ is not consists of any primitive
$1$-normal element over $\mathbb{F}_{q}$.

 Let $p$ denote the characteristic of $\mathbb{F}_{q}$ and let $n=n_{1}p^{e
}=n_{1}t$,  with $gcd(p,n_{1})=1$ and suppose that
 $x^{n}-1$ has the following factorization in $\mathbb{F}_{q}[x]:$
\begin{equation}
x^{n}-1=(\varphi_{1}(x)\varphi_{2}(x)\cdots\varphi_{r}(x))^{t},\label{eq310}
\end{equation}
where $\varphi_{i}(x)\in \mathbb{F}_{q}[x]$ are the distinct
irreducible factors of $x^{n}-1$. For each $s$, $0\leq s<n$, let
there is a $u_s>0$ such that, $R_{s,1}(x)$, $R_{s,2}(x)$, $\cdots$,
$R_{s,{u_s}}(x)$ be all of the $s$ degree polynomials dividing
$x^{n}-1$. So, from (\ref{eq310}) we can write
$R_{s,i}(x)=\prod_{j=1}^{r}{{\varphi_{j}}^{t_{ij}}(x)}$, for each
$1\leq i\leq u_{s}$, $0\leq t_{ij}\leq t$.
  Let
\begin{equation}
\phi_{s,i}(x)=\frac{x^{n}-1}{R_{s,i}(x)}, \label{eq311}
\end{equation}
for $1\leq i \leq u_{s} $. Then, there is a useful characterization
 of the $k$-normal elements in $\mathbb{F}_{q^n}$ over $\mathbb{F}_{q}$ as follows.

\begin{lemma}\label{lem1}
An element $\alpha \in \mathbb{F}_{q^n}$ is a $k$-normal element if
and only if, there is a $j$, $1\leq j\leq u_k$, such that
\begin{equation}\label{eq1}
\phi_{k,{j}}(\sigma)\alpha=0,
\end{equation}
 and also
\begin{equation}\label{eq2}
 \phi_{s,{i}}(\sigma)\alpha\neq0,
\end{equation}
 for each $s$, $k<s<n$, and $1\leq i \leq u_{s} $.
\end{lemma}
\begin{proof}
According to Proposition \ref{proc11} , $\alpha$ is a $k$-normal
element of $\mathbb{F}_{q^n}$ over $\mathbb{F}_{q}$ if and only if
$deg(Ord(\alpha))=n-k$. This is true if and only if the Equations
(\ref{eq1}) and (\ref{eq2}) hold.
 \end{proof}

In the case $k=0$, the Lemma \ref{lem1} is obtained by Schwartz
\cite{14}(see also \cite{11}, Theorem 4.10). Actually, Lemma
\ref{lem1} is a generalization of Schwartz's theorem. The definition
\ref{def11} does give us a method to test if an element is a
$k$-normal element. In fact, the Lemma \ref{lem1} dose give us
another way to check if an element is $k$-normal.

The following results are some immediate consequences of Lemma
\ref{lem1} .

\begin{theorem}\label{thm321k}
Let $n=n_{1}p^e$, for some $e\geq1$ and $a,b\in
\mathbb{F}^{\ast}_{q}$. Then the element $\alpha$ is a proper
$1$-normal element of $\mathbb{F}_{q^n}$ over $\mathbb{F}_{q}$ if
and only if $a+b\alpha$ is a proper $1$-normal element of
$\mathbb{F}_{q^n}$ over $\mathbb{F}_{q}$.
\end{theorem}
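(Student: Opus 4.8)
The plan is to work entirely through the $\sigma$-order characterization of Proposition \ref{proc11} and Lemma \ref{lem1}, showing that the affine map $\alpha \mapsto a + b\alpha$ with $a,b \in \mathbb{F}_q^{\ast}$ preserves both properness and the condition $\deg(Ord_{\alpha,\sigma}) = n-1$. Since the inverse map $\beta \mapsto b^{-1}\beta - b^{-1}a$ is again of this form (both $b^{-1}$ and $-b^{-1}a$ lie in $\mathbb{F}_q^{\ast}$, using $a\neq0$), it suffices to prove one implication. I would first dispose of the scalar $b$: because $\sigma$ fixes $\mathbb{F}_q$, we have $g(\sigma)(b\alpha) = b\,g(\sigma)\alpha$ for every $g \in \mathbb{F}_q[x]$, so $Ord_{b\alpha,\sigma} = Ord_{\alpha,\sigma}$ and multiplication by $b$ alters neither the normality nor the subfield containing the element. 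This reduces the theorem to the translation $\alpha \mapsto a + \alpha$.

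Two of the remaining points are routine. Properness transfers because $a \in \mathbb{F}_q \subseteq \mathbb{F}_{q^v}$ for every divisor $v \mid n$, so $a + \alpha \in \mathbb{F}_{q^v} \iff \alpha \in \mathbb{F}_{q^v}$; indeed for $n \ge 3$ every $1$-normal element is automatically proper, since its $\sigma$-order has degree $n-1$, too large to divide $x^v-1$ for a proper divisor $v$. For the existence half of Lemma \ref{lem1} I would use the identity $g(\sigma)(a+\alpha) = a\,g(1) + g(\sigma)\alpha$, which follows from $\sigma$ fixing $\mathbb{F}_q$ together with $\sum_i g_i = g(1)$. If $\alpha$ is $1$-normal, some $\phi_{1,j}(\sigma)\alpha = 0$ with $\phi_{1,j} = (x^n-1)/(x-c_j)$; here $\phi_{1,j}(1) = 0$ whenever $c_j \ne 1$ (the numerator vanishes), while for $c_j = 1$ one has $\phi_{1,j} = 1 + x + \cdots + x^{n-1}$, whose value at $1$ is $n \equiv 0 \pmod{p}$ — and this is exactly where the hypothesis $e \ge 1$ is essential. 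In both cases $\phi_{1,j}(\sigma)(a+\alpha) = a\,\phi_{1,j}(1) = 0$, so the same factor witnesses the first condition for $a+\alpha$.

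The genuine difficulty, and what I expect to be the main obstacle, is the second (non-vanishing) condition: one must show $\phi_{s,i}(\sigma)(a+\alpha) \ne 0$ for every $s \ge 2$ and every $i$. By the identity above this equals $a\,\phi_{s,i}(1) + \phi_{s,i}(\sigma)\alpha$, and although $\phi_{s,i}(\sigma)\alpha \ne 0$ is known, the added constant $a\,\phi_{s,i}(1)$ could cancel it. A short analysis confines the danger to $s = 2$ with $R_{2,i}$ dividing $(x-1)(x-c)$, where $Ord_{\alpha,\sigma} = (x^n-1)/(x-c)$: only for such factors can $\phi_{s,i}(\sigma)\alpha$ be a \emph{nonzero} element of $\mathbb{F}_q$ (forced by $(\sigma-1)\phi_{s,i}(\sigma)\alpha = 0$), which is precisely the situation in which some $a$ yields $a\,\phi_{s,i}(1) + \phi_{s,i}(\sigma)\alpha = 0$. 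Controlling this cancellation is the crux, and I would try to rule it out by computing $\phi_{s,i}(\sigma)\alpha$ explicitly and comparing it with $a\,\phi_{s,i}(1)$. I should flag, however, that this step does not obviously go through in small characteristic — for instance when $p = 2$, $e = 1$ and $c = 1$ the two terms genuinely coincide for a suitable $a$, producing a proper element of normality $2$ rather than $1$ — so closing this case, or restricting the hypotheses, is where the real work, and the real risk to the statement, lies.
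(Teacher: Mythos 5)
You were right to flag the non-vanishing step as the real risk: the cancellation you isolated genuinely occurs, and it refutes the theorem rather than merely complicating its proof. Concretely, take $q=2$, $n=6=3\cdot 2$ (so $n_1=3$, $t=p^e=2$, $e=1$) and $a=b=1$. Over $\mathbb{F}_2$ we have $x^6-1=(x-1)^2(x^2+x+1)^2$, and there are $\Phi_2\bigl((x-1)(x^2+x+1)^2\bigr)=12$ elements $\alpha\in\mathbb{F}_{2^6}$ with $Ord_{\alpha,\sigma}(x)=(x-1)(x^2+x+1)^2$; each is $1$-normal by Proposition \ref{proc11}, and proper by your own (correct) observation that a $\sigma$-order of degree $n-1=5$ cannot divide $x^v-1$ for any proper divisor $v$ of $6$. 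Put $h(x)=(x^2+x+1)^2$. Since $(x-1)h(x)=Ord_{\alpha,\sigma}(x)$, the element $h(\sigma)\alpha$ is fixed by $\sigma$, hence lies in $\mathbb{F}_2$, and is nonzero because $\deg h=4<5$; so $h(\sigma)\alpha=1$ and $h(\sigma)(1+\alpha)=h(1)+h(\sigma)\alpha=1+1=0$. Thus $1+\alpha$ is a proper element with $Ord_{1+\alpha,\sigma}=(x^2+x+1)^2$, i.e. $2$-normal rather than $1$-normal — exactly your scenario $a\,\phi_{2,i}(1)+\phi_{2,i}(\sigma)\alpha=0$ with $R_{2,i}=(x-1)^2$ and $c=1$. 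No completion of your argument could close this case, because the statement is false there.

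For comparison, the paper's proof takes the same general route as yours (evaluate $\phi_{s,i}(\sigma)$ at $a+b\alpha$ and invoke Lemma \ref{lem1}), but it papers over precisely the point you stalled on: it asserts $\phi_{s,i}(x)=(x-1)^tM_{s,i}(x)=(x-1)M'_{s,i}(x)$, i.e. $\phi_{s,i}(1)=0$, for \emph{every} $(s,i)\neq(1,1)$. This tacitly assumes $(x-1)^t\nmid R_{s,i}(x)$ and fails whenever $n_1>1$: for $R_{s,i}=(x-1)^t$ one gets $\phi_{s,i}(1)=n_1^t\neq0$ (in the example above, $\phi_{2,i}=(x^2+x+1)^2$ with $\phi_{2,i}(1)=1$), so the key identity (\ref{eq337bg}) breaks down exactly where your analysis predicts. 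Indeed your finer analysis is the correct repair of the theorem: cancellation requires $\phi_{s,i}(\sigma)\alpha\in\mathbb{F}^{\ast}_{q}$, forcing $s\le2$ and $R_{2,i}\mid(x-1)(x-c)$; for $c\neq1$ one computes $\phi_{2,i}(1)=0$ since $n\equiv0\pmod p$, and for $c=1$ one has $\phi_{2,i}(x)=(x-1)^{t-2}\bigl((x^{n_1}-1)/(x-1)\bigr)^{t}$, which vanishes at $x=1$ whenever $t>2$. So the statement is true when $p$ is odd or $e\ge2$, and your approach proves it there, while $p=2$, $e=1$ is genuinely a counterexample regime. In short: your proposal is incomplete as a proof, but its diagnosis is sound and strictly more accurate than the paper's own argument, whose faulty factorization conceals the failure you found.
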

\begin{proof}
 Let $n=n_{1}p^{e}=n_{1}t$ , then by (\ref{eq310}), $x^{n}-1$ has the
following factorization in $\mathbb{F}_{q}[x]:$
$$
x^{n}-1=(x^{n_{1}}-1)^{t}=(\varphi_{1}(x)\varphi_{2}(x)\cdots\varphi_{r}(x))^{t}.
$$
Let
\begin{equation}\label{eq2jhs}
 R_{1,1}(x)=\varphi_{1}(x)=x-1.
\end{equation}
 Set for each $s$, $0<s<n$ and
$1\leq i\leq u_{s}$, except when $s=i=1$,
\begin{align}
\phi_{s,i}(x)&=\frac{x^{n}-1}{R_{s,i}(x)}\nonumber\\
&=(x-1)^{t}M_{s,i}(x)\nonumber\\
&=(x-1)M'_{s,i}(x),\nonumber
\end{align}
 where
$$M'_{s,i}(x)=(x-1)^{t-1}M_{s,i}(x)=\sum_{v=0}^{n-s-1}t'_{iv}x^{v}.$$
Hence
\begin{equation}\label{eq2jh}
\phi_{s,i}(x)=\sum_{v=0}^{n-s-1}t'_{iv}x^{v+1}-\sum_{v=0}^{n-s-1}t'_{iv}x^{v}.
\end{equation}
From (\ref{eq2jh}), we obtain
\begin{align}
\phi_{s,{i}}(\sigma)(a+b\alpha)&=\sum_{v=0}^{n-s-1}t'_{iv}(a+b\alpha)^{q^{v+1}}-\sum_{v=0}^{n-s-1}t'_{iv}(a+b\alpha)^{q^{v}},\nonumber\\
&=a\sum_{v=0}^{n-s-1}t'_{iv}+b\sum_{v=0}^{n-s-1}t'_{iv}
\alpha^{q^{v+1}}-a\sum_{v=0}^{n-s-1}t'_{iv}-b\sum_{v=0}^{n-s-1}t'_{iv}\alpha^{q^{v}}\nonumber\\
&=b(\sum_{v=0}^{n-s-1}t'_{iv}\alpha^{q^{v+1}}-
\sum_{v=0}^{n-s-1}t'_{iv}\alpha^{q^{v}})
=b~\phi_{s,{i}}(\sigma)(\alpha), \label{eq337bg}
\end{align}
for each $s$, $0<s<n$ and $1\leq i\leq u_{s}$, except when $s=i=1$.
On the other hand, in the case $s=i=1$, from (\ref{eq2jhs}), we have
$$
\phi_{1,{1}}(x)=\frac{x^{n}-1}{R_{1,1}(x)}=\frac{x^{n}-1}{x-1}=x^{n-1}+x^{n-2}+\cdots+x+1=\sum_{i=0}^{n-1}x^{i}.$$So
\begin{align}
\phi_{1,{1}}(\sigma)(a+b\alpha)&=\sum_{i=0}^{n-1}(a+b\alpha)^{q^{i}}\nonumber\\
&=\sum_{i=0}^{n-1}a+b\sum_{i=0}^{n-1}\alpha^{q^{i}}\nonumber\\
&=na+b~\phi_{1,{1}}(\sigma)(\alpha)\nonumber\\
 &=b~\phi_{1,{1}}(\sigma)(\alpha).
 \label{eq337t}
\end{align}

From (\ref{eq337bg}) and (\ref{eq337t}), it follows that
$$\phi_{s,{i}}(\sigma)(a+b\alpha)=b~\phi_{s,{i}}(\sigma)(\alpha)$$
for each $s$, $0<s<n$ and $1\leq i\leq u_{s}$. By Lemma \ref{lem1},
the proof is completed.
\end{proof}

\begin{theorem}\label{thm322}
Suppose that $\alpha$ is a proper element of $\mathbb{F}_{q^2}$.
Then $\alpha$ is $1$-normal over $\mathbb{F}_{q}$ if and only if
$Tr_{q^2|{q}}(\alpha)=0$.
\end{theorem}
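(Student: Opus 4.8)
The plan is to apply Proposition \ref{proc11} with $n=2$ and $k=1$, which immediately reduces the statement to a claim about the $\sigma$-order of $\alpha$. By that proposition, $\alpha$ is $1$-normal over $\mathbb{F}_{q}$ precisely when $\deg(Ord_{\alpha,\sigma}(x))=n-k=1$. Since $Ord_{\alpha,\sigma}(x)$ is always a monic divisor of $x^{2}-1$, I would first list the degree-one monic divisors of $x^{2}-1$ and translate each into an equation on $\alpha$, then use the hypothesis that $\alpha$ is proper to eliminate the unwanted one.

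In the main (odd characteristic) case, $x^{2}-1=(x-1)(x+1)$, so the only monic divisors of degree one are $x-1$ and $x+1$. The factor $x-1$ corresponds to $(\sigma-1)\alpha=\alpha^{q}-\alpha=0$, i.e.\ $\alpha\in\mathbb{F}_{q}$, while $x+1$ corresponds to $(\sigma+1)\alpha=\alpha^{q}+\alpha=Tr_{q^{2}|{q}}(\alpha)=0$. The crucial use of properness is that $\alpha\notin\mathbb{F}_{q}$, which rules out both $Ord_{\alpha,\sigma}(x)=1$ and $Ord_{\alpha,\sigma}(x)=x-1$. Hence, for a proper $\alpha$, the condition $\deg(Ord_{\alpha,\sigma}(x))=1$ holds if and only if $Ord_{\alpha,\sigma}(x)=x+1$, which is exactly $Tr_{q^{2}|{q}}(\alpha)=0$, and both implications follow simultaneously. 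Concretely, for the forward direction $\deg(Ord_{\alpha,\sigma}(x))=1$ forces $Ord_{\alpha,\sigma}(x)=x-c$ with $c^{2}=1$, and properness gives $c\neq1$, so $c=-1$ and $\alpha^{q}=-\alpha$; for the converse, $Tr_{q^{2}|{q}}(\alpha)=0$ yields $(\sigma+1)\alpha=0$, so $Ord_{\alpha,\sigma}(x)$ divides $x+1$ and, being nonconstant since $\alpha\notin\mathbb{F}_{q}$, equals $x+1$.

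The only real obstacle is the characteristic-$2$ case, where the factorization degenerates to $x^{2}-1=(x-1)^{2}$ and $x+1=x-1$. There the sole degree-one divisor is $x-1$, so a proper element can never have a $\sigma$-order of degree $1$; at the same time $Tr_{q^{2}|{q}}(\alpha)=\alpha+\alpha^{q}=0$ forces $\alpha^{q}=\alpha$, i.e.\ $\alpha\in\mathbb{F}_{q}$, which is again impossible for a proper $\alpha$. Thus in characteristic $2$ both sides of the claimed equivalence fail for every proper $\alpha$, and the biconditional holds vacuously. I would record this case explicitly so that the equivalence is valid for all $q$, and otherwise carry out the argument of the preceding paragraph.
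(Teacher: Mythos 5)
Your proof is correct, and at bottom it travels the same road as the paper: both arguments come down to the divisor structure of $x^2-1$ together with the order-degree characterization of $k$-normality. You invoke Proposition \ref{proc11} directly and classify the monic degree-one divisors of $x^2-1$, while the paper routes through Lemma \ref{lem1} with $\phi_{1,1}(x)=x+1$, $\phi_{1,2}(x)=x-1$; for $n=2$ these are the same dichotomy, since the range $k<s<n$ in Lemma \ref{lem1} is empty and the lemma reduces to ``$Ord_{\alpha,\sigma}(x)$ equals $x-1$ or $x+1$.'' Where you genuinely improve on the paper's proof is in completeness. First, the paper only verifies that $Tr_{q^2|q}(\alpha)=0$ gives $\phi_{1,1}(\sigma)\alpha=0$ and declares the proof finished; it never argues the forward direction, i.e.\ it never uses properness to exclude the alternative $\phi_{1,2}(\sigma)\alpha=\alpha^q-\alpha=0$ (equivalently $Ord_{\alpha,\sigma}(x)=x-1$), which is exactly the step where $\alpha\notin\mathbb{F}_q$ is needed. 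Your argument supplies this: $c^2=1$ and $c\neq 1$ force $c=-1$, and conversely $Ord_{\alpha,\sigma}(x)\mid x+1$ with $\alpha\neq 0$ forces $Ord_{\alpha,\sigma}(x)=x+1$. Second, the paper's factorization $x^2-1=(x-1)(x+1)$ into \emph{distinct} factors silently assumes $q$ odd; in characteristic $2$ one has $x^2-1=(x-1)^2$, there is a single degree-one divisor, and as you observe both sides of the equivalence fail for every proper $\alpha$, so the theorem holds only vacuously there. Recording that degenerate case explicitly, as you do, is a genuine gain in rigor over the published proof; your proposal has no gaps.
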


\begin{proof}
By (\ref{eq310}), $x^2-1$ has the following factorization in
$\mathbb{F}_{q}[x]$:
$$x^2-1=(x-1)(x+1).$$
So, we have $R_{1,{1}}(x)=x-1$ and $R_{1,{2}}(x)=x+1$. Hence,
$\phi_{1,{1}}(x)=x+1$ and $\phi_{1,{2}}(x)=x-1$. Note that, by Lemma
\ref{lem1}, to complete the proof, it is enough that
\begin{equation}\label{eqw2}
\phi_{1,{1}}(\sigma)\alpha={\alpha^{q}}+\alpha=Tr_{q^2|{q}}(\alpha)=0.
\end{equation}
This is satisfied by hypothesis.
\end{proof}
\begin{theorem}\label{thm323}
Let $n$ be a prime different from $p$ and $q$ a primitive element
modulo $n$. Suppose that $\alpha$ is a proper element of
$\mathbb{F}_{q^n}$. Then $\alpha$ is a $1$-normal element of
$\mathbb{F}_{q^n}$ over $\mathbb{F}_{q}$ if and only if
$Tr_{q^n|{q}}(\alpha)=0$.
\end{theorem}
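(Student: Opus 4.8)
The plan is to apply Lemma \ref{lem1} after determining explicitly the factorization of $x^{n}-1$ over $\mathbb{F}_{q}$ and cataloguing all of its divisors. Since $n$ is a prime different from $p$, we have $\gcd(n,p)=1$, so in the notation of (\ref{eq310}) the exponent is $t=1$ and $x^{n}-1$ is a product of distinct irreducible factors. Because $n$ is prime, one has $x^{n}-1=(x-1)\Phi_{n}(x)$, where $\Phi_{n}(x)=x^{n-1}+\cdots+x+1$ is the $n$-th cyclotomic polynomial. The first key step is to show that $\Phi_{n}(x)$ is irreducible over $\mathbb{F}_{q}$: the irreducible factors of $\Phi_{n}$ over $\mathbb{F}_{q}$ all have degree equal to the multiplicative order of $q$ modulo $n$, and the hypothesis that $q$ is a primitive element modulo $n$ forces this order to be $n-1$, so $\Phi_{n}(x)$ is a single irreducible factor of degree $n-1$.

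With the factorization $x^{n}-1=(x-1)\Phi_{n}(x)$ in hand, I would enumerate all monic divisors of $x^{n}-1$: they are $1$, $x-1$, $\Phi_{n}(x)$, and $x^{n}-1$, of degrees $0$, $1$, $n-1$, $n$ respectively. In particular, the only divisor of degree $1$ is $R_{1,1}(x)=x-1$ (so $u_{1}=1$), and the only divisor of degree $s$ with $1<s<n$ is the degree $n-1$ divisor $\Phi_{n}(x)$ (so $u_{n-1}=1$, and there are no divisors of any other degree in this range).

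Next I would compute the associated polynomials $\phi_{s,i}$ from (\ref{eq311}). For $s=i=1$ one gets $\phi_{1,1}(x)=(x^{n}-1)/(x-1)=\sum_{i=0}^{n-1}x^{i}$, so condition (\ref{eq1}) of Lemma \ref{lem1} becomes $\phi_{1,1}(\sigma)\alpha=\sum_{i=0}^{n-1}\alpha^{q^{i}}=Tr_{q^{n}|{q}}(\alpha)=0$, which is exactly the trace condition in the statement. For the non-vanishing condition (\ref{eq2}), the only relevant degree is $s=n-1$, where $\phi_{n-1,1}(x)=(x^{n}-1)/\Phi_{n}(x)=x-1$; thus (\ref{eq2}) reads $\phi_{n-1,1}(\sigma)\alpha=\alpha^{q}-\alpha\neq0$.

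Finally, I would observe that $\alpha^{q}-\alpha\neq0$ holds automatically for a proper element. Since $n$ is prime, the only proper subfield of $\mathbb{F}_{q^{n}}$ is $\mathbb{F}_{q}$, and $\alpha$ is fixed by $\sigma$ (that is, $\alpha^{q}=\alpha$) if and only if $\alpha\in\mathbb{F}_{q}$; as $\alpha$ is assumed proper, $\alpha\notin\mathbb{F}_{q}$ and so (\ref{eq2}) is satisfied. Hence the two conditions of Lemma \ref{lem1} collapse to the single requirement $Tr_{q^{n}|{q}}(\alpha)=0$, giving both directions of the equivalence at once. The main obstacle is the irreducibility of $\Phi_{n}(x)$ over $\mathbb{F}_{q}$; once the primitive-root hypothesis is translated into the statement that $q$ has order $n-1$ modulo $n$, everything else is a direct bookkeeping of divisors together with a routine application of Lemma \ref{lem1}.
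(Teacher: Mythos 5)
Your proposal is correct and follows essentially the same route as the paper: factor $x^{n}-1=(x-1)(x^{n-1}+\cdots+x+1)$, use the primitive-root hypothesis to get irreducibility of the degree-$(n-1)$ factor, and apply Lemma \ref{lem1} with $\phi_{1,1}(x)=\sum_{i=0}^{n-1}x^{i}$ and $\phi_{n-1,1}(x)=x-1$. You are in fact slightly more explicit than the paper, which merely asserts that $\alpha^{q}-\alpha\neq0$ is ``satisfied by hypothesis,'' whereas you spell out that properness of $\alpha$ (with $n$ prime) means exactly $\alpha\notin\mathbb{F}_{q}$.
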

\begin{proof}
Note that $$x^n-1=(x-1)(x^{n-1}+\cdots+x+1).$$ Since $q$ is a
primitive modulo $n$, $x^{n-1}+\cdots+x+1$ is an irreducible
polynomial over $\mathbb{F}_{q}$. Hence, from (\ref{eq310}), we have
$R_{1,{1}}(x)=x-1$ and $R_{n-1,{1}}(x)=x^{n-1}+\cdots+x+1$. So,
$\phi_{1,{1}}(x)=x^{n-1}+ \cdots +x+1$ and $\phi_{n-1,{1}}(x)=x-1$.
Thus , by Lemma \ref{lem1}, $\alpha\in\mathbb{F}_{q^n}$ is a
$1$-normal element of $\mathbb{F}_{q^n}$ over $\mathbb{F}_{q}$ if
and only if
\begin{equation}\label{eqw2}
\phi_{1,{1}}(\sigma)\alpha=\sum_{i=0}^{n-1}{\sigma^i{(\alpha)}}=\sum_{i=0}^{n-1}{\alpha^{q^i}}=Tr_{q^n|{q}}(\alpha)=0.
\end{equation}
and
\begin{equation}\label{eqw3}
\phi_{n-1,{1}}(\sigma)\alpha={\alpha^{q}}-\alpha\neq0.
\end{equation}
The expressions (\ref{eqw2}) and (\ref{eqw3}) are satisfied by
hypothesis. This completes the proof.
\end{proof}

In the case, the $0$-normal elements, similar problems for Theorems
\ref{thm321k} and \ref{thm323} had been established by Kyuregyan
\cite{8} and
 Pei, Wang and Omura \cite{12}, respectively.

 The following corollary which characterize some of
extension finite fields, consist a primitive $1$-normal element, is
an immediate consequence of Theorems \ref{thm322}, \ref{thm323} and
Proposition \ref{proc11z}.

\begin{corollary}\label{core2z}
Let $n$ be a prime different from $p$ and $q$ a primitive element
modulo $n$. Then, A necessary condition for existence a primitive
$1$-normal element in $\mathbb{F}_{q^n}$ over $\mathbb{F}_{q}$ is
$n\geq 3$, except when $n=2$ and $q=3$. Also if $n\geq3$, such a
primitive $1$-normal element exists, except when $n=3$ and $q=4$.
\end{corollary}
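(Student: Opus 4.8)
The plan is to reduce the question of existence of a primitive $1$-normal element to the existence of a primitive element with zero trace, and then to invoke Proposition \ref{proc11z} directly. The bridge between these two notions is supplied by Theorems \ref{thm322} and \ref{thm323}, while Proposition \ref{proc11z} settles the zero-trace side.

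First I would record the elementary but essential observation that every primitive element of $\mathbb{F}_{q^n}$ is automatically a proper element over $\mathbb{F}_{q}$: a primitive element generates the whole multiplicative group $\mathbb{F}^{\ast}_{q^n}$, which has order $q^n-1$, so it cannot lie in any proper subfield $\mathbb{F}_{q^v}$ with $v\mid n$ and $v<n$. Consequently the hypothesis ``proper element'' in Theorems \ref{thm322} and \ref{thm323} is met by any primitive element, and those theorems apply to it without any extra verification.

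Next I would apply the appropriate characterization according to the value of $n$. For $n\geq 3$ prime with $q$ primitive modulo $n$, Theorem \ref{thm323} states that a proper element $\alpha$ is $1$-normal over $\mathbb{F}_{q}$ if and only if $Tr_{q^n|q}(\alpha)=0$; for $n=2$, where the condition that $q$ be primitive modulo $n$ is automatic once $n\neq p$, Theorem \ref{thm322} gives the same equivalence. Combining this with the previous step, a primitive element $\alpha\in\mathbb{F}_{q^n}$ is $1$-normal if and only if it has zero trace. Hence $\mathbb{F}_{q^n}$ contains a primitive $1$-normal element over $\mathbb{F}_{q}$ if and only if it contains a primitive element $\alpha$ with $Tr_{q^n|q}(\alpha)=0$.

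Finally I would transport the known answer for primitive zero-trace elements. Proposition \ref{proc11z} records exactly when such an element exists: the necessary condition $n\geq 3$ fails only for $n=2$ unless $q=3$, and for $n\geq 3$ such an element exists save for the single case $n=3$, $q=4$. Substituting the equivalence of the previous paragraph reproduces the stated conditions for a primitive $1$-normal element verbatim. The only genuinely substantive point in the argument is the reduction itself, namely recognizing that for these $n$ the $1$-normality of a primitive element is governed solely by the vanishing of its trace; everything afterward is a direct quotation of Proposition \ref{proc11z}. I expect no real obstacle beyond checking that the side condition $\alpha^{q}\neq\alpha$ appearing in Theorem \ref{thm323} is automatic for a primitive element, since such an element lies outside $\mathbb{F}_{q}$ as soon as $n\geq 2$, so that only the trace condition remains operative.
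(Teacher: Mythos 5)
Your proposal is correct and follows exactly the route the paper intends: the paper states this corollary as an immediate consequence of Theorems \ref{thm322} and \ref{thm323} together with Proposition \ref{proc11z}, which is precisely your reduction of primitive $1$-normality to primitive elements of zero trace. Your added observations---that a primitive element is automatically proper, and that the condition $\alpha^q\neq\alpha$ in the proof of Theorem \ref{thm323} is automatic for $n\geq 2$---are exactly the details the paper leaves implicit, so the two arguments coincide.
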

In respect of the problem of existence of a primitive $1$-normal
element in $\mathbb{F}_{q^n}$ over $\mathbb{F}_{q}$ , for all $q$
and $n$, the following notation, which is an immediate consequence
of Corollary \ref{core2z} is given.
\begin{notation}
By Corollary \ref{core2z}, it is shown that, there are some
extension finite fields $\mathbb{F}_{q^n}$, with out any primitive
1-normal element over $\mathbb{F}_{q}$. For this, for example, we
can refer to the finite fields $\mathbb{F}_{4}$ over
$\mathbb{F}_{2}$, $\mathbb{F}_{4^3}$ over $\mathbb{F}_{4}$ and
$\mathbb{F}_{q^2}$ over $\mathbb{F}_{q}$, for each odd prime power
$q\neq3$. Consequently, it is shown that, the problem of existence
of a primitive $1$-normal element in $\mathbb{F}_{q^n}$ over
$\mathbb{F}_{q}$, for all $q$ and $n$ is not satisfied.
\end{notation}

\section{Recursive construction $N_{1}$-polynomials}\label{sec3}
In this section, we establish a theorem which will show how Lemma
\ref{lem1} and Proposition \ref{the21xz} can be applied to produce
infinite sequences of $N_{1}$-polynomials over $\mathbb{F}_{2^m}$.
Nothing that
$\phi_{s,{i}}(\sigma)\alpha=\sum_{i=0}^{n-s}{a_i\alpha^{q^i}}$ for
any polynomial $\phi_{s,{i}}(x)=\sum_{i=0}^{n-s}{a_ix^{i}}$, we can
restate Lemma \ref{lem1} as follows.
\begin{proposition}\label{pro3122}
Let $F(x)$ be an irreducible polynomial of degree $n$ over
$\mathbb{F}_{q}$ and $\alpha$ be a root of it. Let $x^{n}-1$ factors
as (\ref{eq310}) and let $\phi_{s,i}(x)$ be as in (\ref{eq311}).
Then $F(x)$ is a $N_{k}$-polynomial over $\mathbb{F}_{q}$ if and
only if, there is $j$, $1\leq j\leq u_k$, such that

$$L_{\phi_{k,{j}}}(\alpha)=0, $$
and also
$$L_{\phi_{s,{i}}}(\alpha)\neq 0,$$
for each $s$, $k<s<n$, and $1\leq i \leq u_{s} $, where $u_{s}$ is
the number of all $s$ degree polynomials dividing $x^{n}-1$ and
 $L_{\phi_{s,{i}}}(x)$ is the linearized polynomial defined by
$$L_{\phi_{s,{i}}}(x)=\sum_{v=0}^{n-s}t_{iv}x^{q^{v}} ~ if ~
\phi_{s,{i}}(x)=\sum_{v=0}^{n-s}t_{iv}x^{v}.$$
\end{proposition}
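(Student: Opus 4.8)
The plan is to show that Proposition \ref{pro3122} is nothing more than Lemma \ref{lem1} rewritten in the language of linearized polynomials, so the entire task reduces to translating the operator $\phi_{s,i}(\sigma)$ acting on $\alpha$ into an evaluation of the associated linearized polynomial. First I would recall that $F(x)$ being an $N_k$-polynomial over $\mathbb{F}_q$ means, by definition, that its roots are $k$-normal elements of $\mathbb{F}_{q^n}$ over $\mathbb{F}_q$. Since $\alpha$ is a root of the irreducible polynomial $F(x)$, it therefore suffices to characterize when $\alpha$ itself is $k$-normal, and this is exactly what Lemma \ref{lem1} provides: $\alpha$ is $k$-normal if and only if there is a $j$ with $1 \leq j \leq u_k$ such that $\phi_{k,j}(\sigma)\alpha = 0$, while $\phi_{s,i}(\sigma)\alpha \neq 0$ for all $s$ with $k < s < n$ and all $1 \leq i \leq u_s$.

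The one genuine step is the identity linking the operator form and the linearized form. Writing $\phi_{s,i}(x) = \sum_{v=0}^{n-s} t_{iv} x^v$, I would expand $\phi_{s,i}(\sigma)\alpha = \bigl(\sum_{v=0}^{n-s} t_{iv}\sigma^v\bigr)\alpha$ using the fact, already recorded in the preliminaries, that $g(\sigma)\eta = \sum_i g_i \eta^{q^i}$ for any $g(x) = \sum_i g_i x^i$. Applying $\sigma^v(\alpha) = \alpha^{q^v}$ term by term gives
\begin{equation}
\phi_{s,i}(\sigma)\alpha = \sum_{v=0}^{n-s} t_{iv}\,\alpha^{q^v} = L_{\phi_{s,i}}(\alpha),\nonumber
\end{equation}
which is precisely the definition of $L_{\phi_{s,i}}$ evaluated at $\alpha$. (This is the observation flagged in the sentence immediately preceding the proposition, so the computation is routine.) Consequently the condition $\phi_{k,j}(\sigma)\alpha = 0$ is identical to $L_{\phi_{k,j}}(\alpha) = 0$, and likewise each nonvanishing condition $\phi_{s,i}(\sigma)\alpha \neq 0$ becomes $L_{\phi_{s,i}}(\alpha) \neq 0$.

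Substituting these equalities into the statement of Lemma \ref{lem1} yields the asserted characterization verbatim, completing the proof. I do not anticipate a real obstacle here: the result is a reformulation rather than a new theorem, and the only point requiring care is bookkeeping—ensuring that the degree bound $\deg \phi_{s,i} = n - s$ matches the upper summation limit in the linearized polynomial, and that the index ranges for $s$ and $i$ are carried across unchanged. The substantive mathematical content has already been established in Lemma \ref{lem1}, which in turn rests on Proposition \ref{proc11} relating $k$-normality to $\deg(\mathrm{Ord}_{\alpha,\sigma}(x)) = n-k$.
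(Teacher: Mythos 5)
Your proposal matches the paper exactly: the paper offers no separate proof of Proposition~\ref{pro3122}, presenting it as a direct restatement of Lemma~\ref{lem1} via the remark that $\phi_{s,i}(\sigma)\alpha=\sum_{v} t_{iv}\alpha^{q^v}=L_{\phi_{s,i}}(\alpha)$, which is precisely the translation you carry out. Your version is, if anything, slightly more careful, since you also spell out that checking the single root $\alpha$ suffices because an $N_k$-polynomial is characterized by the $k$-normality of its roots.
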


The following theorem dose give us a recursive method for
construction $1$-normal polynomials of higher degree from a given
$1$-normal polynomial over $\mathbb{F}_{2^m}$. A similar result, in
the case $0$-normal polynomials has been established by Kyuregyan in
\cite{8}. We use of an analogues technique, which used by Kyuregyan,
in the proof of theorem
\begin{theorem}\label{the21}
Let $\delta\in\mathbb{F}^{\ast}_{2^s}$ and
$F_{1}(x)=\sum_{i=0}^{n}c_{i}x^{i}$ be $N_{1}$-polynomial of degree
the even integer  $n$ over $\mathbb{F}_{2^m}$. Also let $
Tr_{2^{m}|{2}}(\frac{c_{1}\delta}{c_{0}})=1$ and
$Tr_{2^{m}|{2}}(\frac{c_{n-1}}{\delta})=1$. Then the sequence
${(F_{u}(x))}_{u\geq1}$, defined by
\begin{equation}\label{eq8t}
 F_{u+1}(x)={x^{n2^{u-1}}}F_{u}(x+\delta^2x^{-1}) \hspace{1.5cm} u\geq 1.
\end{equation}
 is a sequence of $N_{1}$-polynomials of degree $2^un$ over $\mathbb{F}_{2^m}$
for every $u\geq1$.
\end{theorem}
\begin{proof}
 Since $F_{1}(x)$ is an irreducible polynomial over
$\mathbb{F}_{2^m}$, so Proposition \ref{the21xz} and the hypothesis
imply that
$F_{2}(x)$ is irreducible over $\mathbb{F}_{2^m}$.\\
Let $\alpha\in\mathbb{F}_{2^{mn}}$ be a root of $F_{1}(x)$. Since
$F_{1}(x)$ is an $N_{1}$-polynomial of degree $n$ over
$\mathbb{F}_{2^m}$ by the hypothesis, so
$\alpha\in\mathbb{F}_{2^{mn}}$ is a $1$-normal element over
$\mathbb{F}_{2^m}$.

 Let $n=n_{1}2^{e}$, where $n_{1}$ is
a non-negative integer with $gcd(n_{1},2)=1$ and $e\geq0$. For
convenience we denote $2^e$ by $t$. Let $x^n+1$ has the following
factorization in $\mathbb{F}_{2^m}[x]$:
\begin{equation}
x^{n}+1={(x^{n_{1}}+1)}^t=(\varphi_{1}(x)\varphi_{2}(x)\cdots\varphi_{r}(x))^{t},
\label{eq333}
\end{equation}
 where
the polynomials $\varphi_{i}(x)\in\mathbb{F}_{q}[x]$ are the
distinct irreducible factors of $x^{n_{1}}+1$. From (\ref{eq333}),
for each $1<s<n$, that, there is a $u_s>0$ such that, $R_{s,1}(x)$,
$R_{s,2}(x)$, $\cdots$, $R_{s,{u_s}}(x)$ are all of the $s$ degree
polynomials dividing $x^{n}+1$, we can write
$R_{s,i}(x)=\prod_{j=1}^{r}{{\varphi_{j}}^{t_{ij}}(x)}$, for each
$1\leq i\leq u_{s}$, $0\leq t_{ij}\leq t$.
  Let
\begin{equation}\label{eq311}
\phi_{s,i}(x)=\frac{x^{n}+1}{R_{s,i}(x)}=\sum_{v=0}^{n-s}{t_{iv}{x^{v}}},
\end{equation}
for $1\leq i \leq u_{s} $. Since $F_{1}(x)$ is an $N_{1}$-polynomial
of degree $n$ over $\mathbb{F}_{2^m}$, so by Proposition
\ref{pro3122}, there is a $j$, $1\leq j\leq u_1$, such that
\begin{equation}\label{eq311m}
L_{\phi_{1,{j}}}(\alpha)=0,
\end{equation}
 and also
 \begin{equation}\label{eq311n}
L_{\phi_{s,{i}}}(\alpha)\neq 0,
\end{equation}
for each $1<s<n$, which there is a $u_{s}>0$ such that,
$R_{s,1}(x)$, $R_{s,2}(x)$, $\cdots$, $R_{s,{{u_{s}}}}(x)$ be all of
the $s$ degree polynomials dividing $x^{n}+1$, and $1\leq i \leq
u_{s} $.

Now, we proceed by proving that $F_{u+1}(x)$ is a $1$-normal
polynomial. Obviously, by Proposition \ref{the21xz} and the
hypothesis, $F_{u+1}(x)$ is irreducible over $\mathbb{F}_{2^m}$. Let
$\alpha_{u+1}$ be a root of $F_{u+1}(x)$.
Note that by (\ref{eq333}), the polynomial $x^{2^un}+1$ has the
following factorization in $\mathbb{F}_{2^m}[x]$:
 \begin{equation}\label{eq311nf}
x^{2^un}+1=(\varphi_{1}(x)\cdot \varphi_{2}(x)\cdot \cdots \cdot
\varphi_{r}(x))^{2^ut},
\end{equation}
 where $\varphi_{j}(x)\in
\mathbb{F}_{2^m}[x]$ are distinct irreducible factors of
$x^{n_{1}}+1$.

 For each $1<s'<2^un$, let there is a $u'_{s'}>0$
such that, $R'_{s',1}(x)$, $R'_{s',2}(x)$, $\cdots$,
$R'_{s,{{u'_{s'}}}}(x)$ be all of the $s'$ degree polynomials
dividing $x^{2^un}+1$. So, from (\ref{eq311nf}) we can write
$R'_{s',i'}(x)=\prod_{j=1}^{r}{{\varphi_{j}}^{t'_{i'j}}(x)}$, for
each $1\leq i'\leq u'_{s'}$, $0\leq t'_{i'j}\leq 2^ut$.
  Let
\begin{equation}
H'_{s',i'}(x)=\frac{x^{2^un}+1}{R'_{s',i'}(x)} \label{eq3111}
\end{equation}
for $1\leq i' \leq u'_{s'} $. By Proposition \ref{pro3122},
$F_{u+1}(x)$ is an $N_{1}$-polynomial of degree $2^un$ over
$\mathbb{F}_{2^m}$ if and only if, there is a $j$, $1\leq j\leq
u'_1$, such that

$$L_{H'_{1,{j}}}(\alpha_{u+1})=0, $$
and also
$$L_{H'_{s',{i'}}}(\alpha_{u+1})\neq 0,$$
for each $1<s'<2^un$, which there is a $u'_{s'}>0$ such that,
$R'_{s',1}(x)$, $R'_{s',2}(x)$, $\cdots$, $R'_{s,{{u'_{s'}}}}(x)$ be
all of the $s'$ degree polynomials dividing $x^{2^un}+1$, and $1\leq
i' \leq u'_{s'} $.

Let $H_{s,i}(x)=\frac{x^{2^un}+1}{R_{s,i}(x)}$, for each $1<s<n$,
and $1\leq i \leq u_{s}$, satisfied in the stated conditions.
 We note that by (\ref{eq311})

\begin{align}\label{eq31112}
H_{s,i}(x)&=\frac{x^{2^un}+1}{R_{s,i}(x)}\nonumber\\
&=\frac{(x^n+1)(\sum_{j=0}^{2^u-1}{x^{jn}})}{R_{s,i}(x)}\nonumber\\
&=(\sum_{j=0}^{2^u-1}{x^{jn}})\phi_{s,i}(x)\nonumber\\
&=\sum_{v=0}^{n-s}{t_{iv}{(\sum_{j=0}^{2^u-1}{x^{jn+v}}})}.
\end{align}
It follows that
\begin{equation}\label{eq31112}
L_{H_{s,i}}(\alpha_{u+1})=\sum_{v=0}^{n-s}{t_{iv}{(\sum_{j=0}^{2^u-1}{{{\alpha}^{2^{jmn}}_{u+1}}}})}^{2^{mv}}.
\end{equation}

From (\ref{eq8t}), if $\alpha_{y+1}$ is a root of $F_{y+1}(x)$, then
$\alpha_{y+1}+\delta^2{\alpha^{-1}_{y+1}}$ is a root of  $F_{y}(x)$,
and therefore, it may assume that
\begin{equation}\label{eq31112x}
\alpha_{y+1}+\delta^2{\alpha^{-1}_{y+1}}=\alpha_{y}
\end{equation}
for all $1\leq y\leq u$, where $\alpha_{1}=\alpha$.

Now, by (\ref{eq31112x}) and observing that $F_{y}(x)$ is an
irreducible polynomial of degree $2^{y-1}n$ over
$\mathbb{F}_{2^{m}}$, we obtain

\begin{equation}\label{eq31112y}
\alpha^{2^{2^{y-1}mn}}_{y+1}+\delta^2\alpha^{{-2}^{2^{y-1}mn}}_{y+1}=\alpha^{2^{2^{y-1}mn}}_{y}=\alpha_{y}.
\end{equation}
So from (\ref{eq31112x}) and (\ref{eq31112y}) we have

\begin{align}\label{eq31112y}
(\alpha^{2^{2^{y-1}mn}}_{y+1}+\alpha_{y+1})+\delta^2(\alpha^{{-2}^{2^{y-1}mn}}_{y+1}+{\alpha^{-1}_{y+1}})&=0.\nonumber
\end{align}
Or
\begin{equation}\label{eq31112z}
(\alpha^{2^{2^{y-1}mn}}_{y+1}+\alpha_{y+1})(1+\frac{\delta^2}{{\alpha}_{y+1}^{2^{2^{y-1}mn}+1}})=0.
\end{equation}
Also observing that $F_{y+1}(x)$ is an irreducible polynomial of
degree $2^yn$ over $\mathbb{F}_{2^{m}}$, we have
$\alpha^{2^{2^{y-1}mn}}_{y+1}+\alpha_{y+1}\neq0$, and so
${\alpha}_{y+1}^{2^{2^{y-1}mn}}=\delta^2\alpha^{-1}_{y+1}$. It
follows that
\begin{equation}\label{eq31112z1}
\alpha^{2^{2^{y-1}mn}}_{y+1}+\alpha_{y+1}=\delta^2\alpha^{-1}_{y+1}+\alpha_{y+1}=\alpha_{y}
\end{equation}
for each $y$, $1\leq y \leq u$. Hence by (\ref{eq31112z1}) we obtain

\begin{align}\label{eq31112z2}
\sum_{j=0}^{2^u-1}{{{\alpha}^{2^{jmn}}_{u+1}}}&=\sum_{j=0}^{{2^{u-1}}-1}{{(\alpha^{2^{2^{u-1}mn}}_{u+1}+\alpha_{u+1})}^{2^{jmn}}}=\sum_{j=0}^{{2^{u-1}}-1}{{\alpha^{2^{jmn}}_{u}}}\nonumber\\
&=\cdots=\sum_{j=0}^{{2^{u-z}}-1}{{{\alpha}^{2^{jmn}}_{u-z+1}}}=\sum_{j=0}^{{2^{u-z}}-1}{{(\alpha^{2^{2^{u-z-1}mn}}_{u-z+1}+\alpha_{u-z+1})}^{2^{jmn}}}\nonumber\\
&=\sum_{j=0}^{{2^{u-z}}-1}{{{\alpha}^{2^{jmn}}_{u-z}}}=\cdots=\sum_{j=0}^{1}{{{\alpha}^{2^{jmn}}_{2}}}=\alpha_{1}=\alpha.\nonumber
\end{align}
Therefore, $\sum_{j=0}^{2^u-1}{{{\alpha}^{2^{jmn}}_{u+1}}}=\alpha$.
By substituting $\alpha$ for
$\sum_{j=0}^{2^u-1}{{{\alpha}^{2^{jmn}}_{u+1}}}$ in (\ref{eq31112}),
we get
\begin{equation}\label{eq31112z3}
L_{H_{s,i}}(\alpha_{u+1})=\sum_{v=0}^{n-s}{t_{iv}{\alpha}^{2^{mv}}}=L_{\phi_{s,i}}(\alpha)
\end{equation}
 for each $s$, $1\leq s<n$, which there is a $u_s>0$ such that,
$R_{s,1}(x)$, $R_{s,2}(x)$, $\cdots$, $R_{s,{u_s}}(x)$ are all of
the $s$ degree polynomials dividing $x^{n}+1$.

So, by (\ref{eq311m}) and (\ref{eq31112z3}), there is a $j$,
$1<j<u'_{1}=u_{1}$, such that
$$L_{H'_{1,j}}(\alpha_{u+1})=L_{H_{1,j}}(\alpha_{u+1})=L_{\phi_{1,j}}(\alpha)=0$$

Now we clime that, for each $1<s'<2^un$, which there is a
$u'_{s'}>0$, such that, $R'_{s',1}(x)$, $R'_{s',2}(x)$, $\cdots$,
$R'_{s,{{u'_{s'}}}}(x)$ be all of the $s'$ degree polynomials
dividing $x^{2^un}+1$, and $1\leq i' \leq u'_{s'} $ we have
$$L_{H'_{s',{i'}}}(\alpha_{u+1})\neq 0.$$
Setting $t'_{i'j}=t$ for each $ t'_{i'j}>t$ in
$R'_{s',i'}(x)=\prod_{j=1 }^{r}{{\varphi_{j}}^{t'_{i'j}}(x)}$, we
get

 $$(R'_{s',i'}(x))_{\mid_{t'_{i'j}=t for t'_{i'j}>t}}=\prod_{j=1,t'_{i'j}=t for t'_{i'j}>t
}^{r}{{\varphi_{j}}^{t'_{i'j}}(x)}= \prod_{j=1
}^{r}{{\varphi_{j}}^{t_{i_{0}j}}(x)}=R_{s_{0},i_{0}}(x),$$ for some
$1<s_{0}<n$ and $1\leq i_{0}\leq u_{s_{0}}$. Hence,
$R_{s_{0},i_{0}}(x)$ divide $R'_{s',i'}(x)$ and so $H'_{s',i'}(x)$
divide $H_{s_{0},i_{0}}(x)$. Consequently,
$L_{H'_{s',i'}}(\alpha_{u+1})$ divide
$L_{H_{s_{0},i_{0}}}(\alpha_{u+1})$. Therefore, by (\ref{eq311n})
and (\ref{eq31112z3}), the clime is true. The proof is completed.
\end{proof}

\end{document}